\documentclass[final,3p,times]{elsarticle}

\usepackage{amsmath}

\usepackage{amssymb}
\usepackage{bm}
\usepackage{amsthm}

\newtheorem{corollary}{Corollary}
\newtheorem{proposition}{Proposition}

\usepackage[colorlinks]{hyperref}
\usepackage{mathtools}
\mathtoolsset{showonlyrefs=true}
\usepackage{tikz}

\journal{Applied Mathematics Letters}

\newcommand{\dif}{\,\mathrm{d}}

\begin{document}

\begin{frontmatter}



\title{Error control technique of quadrature-based algorithms for the action of real powers of a Hermitian positive-definite matrix}


\author[nu]{Motohiro Otsuka\corref{cor1}}
\author[IHI]{Fuminori Tatsuoka}
\author[nu]{Tomohiro Sogabe}
\author[nu]{Kota Takeda}
\author[nu]{Shao-Liang Zhang}

\affiliation[nu]{organization={Department of Applied Physics, Graduate School of Engineering, Nagoya University},
            addressline={Furo-cho, Chikusa-ku},
            city={Nagoya},
            state={Aichi},
            postcode={464-8603},
            country={Japan}}
\affiliation[IHI]{organization={Corporate Research and Development Division, IHI Corporation},
            addressline={1 Shin-nakahara-cho},
            city={Isogo-ku},
            state={Yokohama},
            postcode={235-8501},
            country={Japan}}
\cortext[cor1]{Corresponding author}

\begin{abstract}
This study considers quadrature-based algorithms to compute $A^\alpha \bm{b}$, the action of a real power of a Hermitian positive-definite matrix $A$ on a vector $\bm{b}$.
In these algorithms, the computation of an integral representation of $A^{\alpha}\bm{b}$ is reduced to solving several tens or hundreds of shifted linear systems.
Current approaches usually analyze the quadrature discretization error, but rarely take into account the additional error introduced by solving these shifted linear systems with iterative solvers.
Here, we bound this error with the residual of the approximated solution of these linear systems.
This allows the derivation of a stopping criterion for iterative solvers to keep the error of $A^\alpha \bm{b}$ below a prescribed error tolerance.
Numerical results demonstrate that the proposed criterion enables the computation of $A^\alpha \bm{b}$ within prescribed tolerance limits.
\end{abstract}



\begin{keyword}
matrix function, matrix fractional power, numerical quadrature, error control


\end{keyword}

\end{frontmatter}


\section{Introduction}
\label{sec:introduction}
We consider the action of the matrix real power on a vector $\bm{b} \in \mathbb{C}^n$, given by $A^\alpha \bm{b}$ where the matrix $A \in  \mathbb{C}^{n\times n}$, is raised to the power of $\alpha$, a real number.
The matrix real power is defined via the matrix exponential $\exp(X) := I + X + X^2 / 2! + X^3 / 3! + \cdots ~ (X \in \mathbb{C}^{n \times n})$ and the matrix principal logarithm.
The matrix logarithm is any matrix $Y \in \mathbb{C}^{n \times n}$ that satisfies $\exp(Y) = A$.
If all the eigenvalues of $Y$ are contained in $\{ z \in \mathbb{C} : |\Im(z)| < \pi \}$, then $Y$ is called the principal logarithm of $A$ and is denoted $\log(A)$.
The principal matrix real power is then defined as $A^\alpha = \exp(\alpha \log(A))$, \cite{Higham2}.
When $0 < \alpha < 1$, $A^\alpha$ admits the following integral representation \cite[Eq.~(1.4)]{Higham2}:
\begin{equation}
A^\alpha = \frac{\sin(\alpha\pi)}{\alpha\pi} A \int_0^\infty (t^{1/\alpha} I + A)^{-1} \dif t \quad (0 < \alpha < 1).
\label{eq:integral_representation}
\end{equation}
Since $A^\alpha = A^{\lfloor \alpha \rfloor} A^{\alpha - \lfloor \alpha \rfloor}$ holds for any real $\alpha$, we focus on the cases where $0 < \alpha < 1$ entails no loss of generality.
Applications of the form of $A^\alpha \bm{b}$ arise in various scientific computing situations, e.g., numerical methods used for fractional differential equations \cite{ref_diff_eq1, ref_diff_eq2} and the computation of weighted matrix geometric means \cite{Fasi}.

Although several computational methods for computing $A^\alpha \bm{b}$ exist, we here focus on quadrature-based algorithms.
In these algorithms, Equation \eqref{eq:integral_representation} is discretized using a suitable change of variables and quadrature formulas, such as the Gauss--Jacobi (GJ) quadrature \cite{Cardoso, Fasi} and the double-exponential (DE) formula \cite{Tatsuoka1}.
This gives the approximation
\begin{equation}
  A^\alpha \bm{b} \approx \frac{\sin(\alpha\pi)}{\alpha\pi} \sum_{k = 1}^{m} w_k A (t_k^{1/\alpha}I + A)^{-1} \bm{b},
  \label{eq:quadrature_approx}
\end{equation}
where $t_k \geq 0$ are quadrature nodes and $w_k$ are the corresponding weights.
An advantage of quadrature-based algorithms is that, for large sparse matrices, they enable $A^\alpha \bm{b}$ to be computed without explicitly forming $A^\alpha$.
Moreover, the independence of the computations at the different quadrature nodes makes these methods well suited to parallel implementation.

Existing research into quadrature-based algorithms \cite{Cardoso, Fasi, Tatsuoka1} typically focuses on the quadrature discretization error, with the assumption that the integrand can be computed exactly.
However, when using an iterative method such as the shifted conjugate gradient (Shifted CG) method \cite{Krylov} for computing $(t^{1/\alpha}I+A)^{-1}\bm{b}$, the integrand is not computed exactly.
The resulting error must therefore be considered.
An unanswered question is how the stopping criterion in iterative methods should be set at each quadrature node.

In this paper, we analyze the integrand evaluation error for the case where $A$ is Hermitian positive definite.
We propose a stopping criterion for an iterative method for solving linear systems in the integrand at each quadrature node.
First, we derive an upper bound on the integrand evaluation error using the residuals of  approximate solutions to the linear systems $(t_k^{1/\alpha}I+A)^{-1}\bm{b}$.
This allows the integral evaluation error to be estimated both theoretically and numerically.
We hence propose the stopping criterion for an iterative method for each linear system to compute $A^\alpha \bm{b}$ with the prescribed accuracy.

The remainder of this paper is organized as follows.
Section~\ref{sec:error_analysis} analyzes the error arising from the inexact evaluation of the integrand.
A practical stopping criterion for the iterative solver is devised such that the overall approximation meets a prescribed tolerance condition.
Section~\ref{sec:numerical_experiments} reports numerical experiments that illustrate the effectiveness of the proposed error-control technique.
Finally, Section~\ref{sec:conclusion} finishes with concluding remarks.

\section{Error Analysis and Control}
\label{sec:error_analysis}

In this section, we analyze the integrand evaluation error and propose a stopping criterion of an iterative method for solving $(t_k^{1/\alpha}I+A)^{-1} \bm{b}$ that guarantees a prescribed overall error condition.

Two sources of error arise when computing $A^\alpha \bm{b}$ via Equation \eqref{eq:quadrature_approx}: the discretization error due to numerical quadrature and the error in evaluating the integrand at each quadrature node.
Taking the exact value of the integrand contribution at the $k$th node as
\begin{equation}
  \bm{g}_k := \frac{\sin (\alpha \pi)}{\alpha \pi} w_k A \left( t_k^{1/\alpha}I + A \right)^{-1} \bm{b},
\end{equation}
and its computed approximation denoted $\tilde{\bm{g}}_k$, the total error can be decomposed as
\begin{equation}
  \left\|
  A^\alpha \bm{b} - \sum_{k = 1}^{m} \tilde{\bm{g}}_k
  \right\|_2
  \leq
  \underbrace{
  \left\|
  A^\alpha \bm{b} - \sum_{k = 1}^{m} \bm{g}_k
  \right\|_2
  }_{\text{quadrature error}}
  +
  \underbrace{
  \left\|
  \sum_{k = 1}^{m} \bm{g}_k - \sum_{k = 1}^{m} \tilde{\bm{g}}_k
  \right\|_2
  }_{\text{integrand evaluation error}}
  \label{eq:error_decomposition}
\end{equation}
where $\|\cdot\|_2$ denotes the vector $2$-norm (Euclidean norm).

To bound the total error by a tolerance level $\epsilon$, we control the first term (quadrature error) and the second term (integrand evaluation error) on the right-hand side of Equation \eqref{eq:error_decomposition} so that neither of these quantities exceeds $\epsilon/2$.
The quadrature error was previously analyzed using the GJ quadrature \cite{Cardoso, Fasi} and the DE formula \cite{Tatsuoka1, Tatsuoka2}.
Therefore, we focus on controlling the integrand evaluation error so that it is bounded by $\epsilon/2$.

\subsection{Estimating the integrand evaluation error}

To ensure that the total integrand evaluation error is no greater than $\epsilon/2$, it suffices, by the triangle inequality, to bound the error at each quadrature node by $\epsilon/(2m)$:
\begin{align}
  \left\|
  \sum_{k = 1}^{m} \bm{g}_k - \sum_{k = 1}^{m} \tilde{\bm{g}}_k
  \right\|_2
  &\leq
  \sum_{k=1}^{m} \left\| \bm{g}_k - \tilde{\bm{g}}_k \right\|_2
  \leq
  \frac{\epsilon}{2}.
\end{align}
It therefore suffices to enforce, for each $k = 1, \ldots, m$, the condition
\begin{equation}
  \left\| \bm{g}_k - \tilde{\bm{g}}_k \right\|_2
  \leq
  \frac{\epsilon}{2m}.
  \label{eq:pointwise_error_bound}
\end{equation}

We present the following proposition to estimate the error at each quadrature node.
The error in computing $A(\sigma I + A)^{-1}\bm{b}$ is estimated using an approximate solution $\tilde{\bm{x}}$ of the shifted linear system $(\sigma I + A)\bm{x}=\bm{b}$ expanded in terms of the residual norm $\|\bm{b}-(\sigma I + A)\tilde{\bm{x}}\|_2$.

\begin{proposition}\label{prop:error_bound}
Let $A \in \mathbb{C}^{n \times n}$ be Hermitian positive definite, and let $\lambda_{\text{max}}$ denote its largest eigenvalue.
Then, for any real $\sigma \geq 0$ and any $\bm{x} \in \mathbb{C}^{n}$, the following bound holds:
 \begin{equation}
   \left\|
   A \left( \sigma I + A \right)^{-1} \bm{b} - A \bm{x}
   \right\|_2
   \leq
   \frac{1}{1 + \sigma / \lambda_{\text{max}}}
   \left\|
   \bm{b} - \left( \sigma I + A \right)\bm{x}
   \right\|_2.
   \label{eq:prop_bound}
 \end{equation}
\end{proposition}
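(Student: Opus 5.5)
The plan is to write the error in terms of the residual and then bound the resulting matrix in the spectral norm. Set $\bm{r} := \bm{b} - (\sigma I + A)\bm{x}$. Since $A$ is Hermitian positive definite and $\sigma \geq 0$, the matrix $\sigma I + A$ is Hermitian positive definite and hence invertible. This gives $\bm{x} = (\sigma I + A)^{-1}(\bm{b} - \bm{r})$. Substituting,
\begin{equation}
  A(\sigma I + A)^{-1}\bm{b} - A\bm{x} = A(\sigma I + A)^{-1}\bm{r},
\end{equation}
and therefore the left-hand side of \eqref{eq:prop_bound} is at most $\|A(\sigma I + A)^{-1}\|_2 \, \|\bm{r}\|_2$.

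It remains to show that $\|A(\sigma I + A)^{-1}\|_2 = 1/(1+\sigma/\lambda_{\text{max}})$. Diagonalize $A = U \Lambda U^{*}$ with $U$ unitary and $\Lambda = \mathrm{diag}(\lambda_1,\dots,\lambda_n)$, where $\lambda_i > 0$. Then $A(\sigma I + A)^{-1} = U \,\mathrm{diag}\bigl(\lambda_i/(\sigma+\lambda_i)\bigr) U^{*}$. This matrix is Hermitian with nonnegative eigenvalues, so its 2-norm equals $\max_i \lambda_i/(\sigma+\lambda_i)$.

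The function $f(\lambda) = \lambda/(\sigma+\lambda) = 1/(1+\sigma/\lambda)$ is nondecreasing in $\lambda>0$ for $\sigma \ge 0$, since $f'(\lambda) = \sigma/(\sigma+\lambda)^2 \ge 0$. Hence the maximum is attained at $\lambda_{\text{max}}$, which gives exactly the factor $1/(1+\sigma/\lambda_{\text{max}})$. Combining this with the previous step yields \eqref{eq:prop_bound}.

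There is no real obstacle here. The only points needing care are the invertibility of $\sigma I + A$ (which uses $\sigma \geq 0$ and the positive definiteness of $A$) and the use of simultaneous unitary diagonalization. The latter justifies both that $A$ and $(\sigma I + A)^{-1}$ commute and that the spectral norm equals the largest eigenvalue.
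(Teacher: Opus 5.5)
Your proposal is correct and follows the paper's proof essentially step for step: you rewrite the error as $A(\sigma I + A)^{-1}\bm{r}$, bound it by $\|A(\sigma I+A)^{-1}\|_2\,\|\bm{r}\|_2$, and evaluate that norm as $\max_{\lambda}\lambda/(\lambda+\sigma)=\lambda_{\text{max}}/(\lambda_{\text{max}}+\sigma)$ via monotonicity. Your observation that $f$ is only nondecreasing (it is constant when $\sigma=0$) is slightly more careful than the paper's ``increasing monotonically,'' but it does not affect the argument.
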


\begin{proof}
By the definition of the induced operator norm, we have
\begin{align}
  \left\|
  A \left( \sigma I + A \right)^{-1} \bm{b} - A \bm{x}
  \right\|_2
  &=
  \left\|
  A \left( \sigma I + A \right)^{-1} \left( \bm{b} - \left( \sigma I + A \right) \bm{x} \right)
  \right\|_2 \notag \\
  & \leq
  \left\|
   A \left( \sigma I + A \right)^{-1}
  \right\|_2
  \left\|
  \bm{b} - \left( \sigma I + A \right) \bm{x}
  \right\|_2.
  \label{eq:triangle_ineq}
\end{align}
For $\lambda>0$, the scalar function $f(\lambda)=\lambda/(\lambda+\sigma)$ is positive and increasing monotonically.
Since $A$ is Hermitian positive definite, it follows that
\begin{equation}
  \left\|
   A \left( \sigma I + A \right)^{-1}
  \right\|_2
  =
  \max_{\lambda \in \sigma(A)} \frac{\lambda}{\lambda + \sigma}
  =
  \frac{\lambda_{\text{max}}}{\lambda_{\text{max}} + \sigma}
  =
  \frac{1}{1 + \sigma / \lambda_{\text{max}}},
  \label{eq:spectral_norm}
\end{equation}
where $\sigma(A)$ denotes the spectrum (the set of eigenvalues) of $A$.
Combining Equation \eqref{eq:triangle_ineq} and \eqref{eq:spectral_norm} gives Equation \eqref{eq:prop_bound}.
\end{proof}

The bound in Proposition~\ref{prop:error_bound} implies that the factor $1/(1+\sigma/\lambda_{\text{max}})$ decreases as the shift parameter $\sigma$ increases, leading to a tighter estimate in terms of the residual norm.
Conversely, for small $\sigma$, this factor approaches $1$; the residual norm essentially acts directly as an upper bound for the error.
This behavior is illustrated in Figure \ref{fig:sigma_ex} with a concrete example involving a matrix $A$ that is the two-dimensional Laplacian discretized by the five-point centered difference scheme.
As shown, the error is tightly bounded according to the proposed upper bound.

\begin{figure}[htbp]
  \centering
  \includegraphics[width=\textwidth]{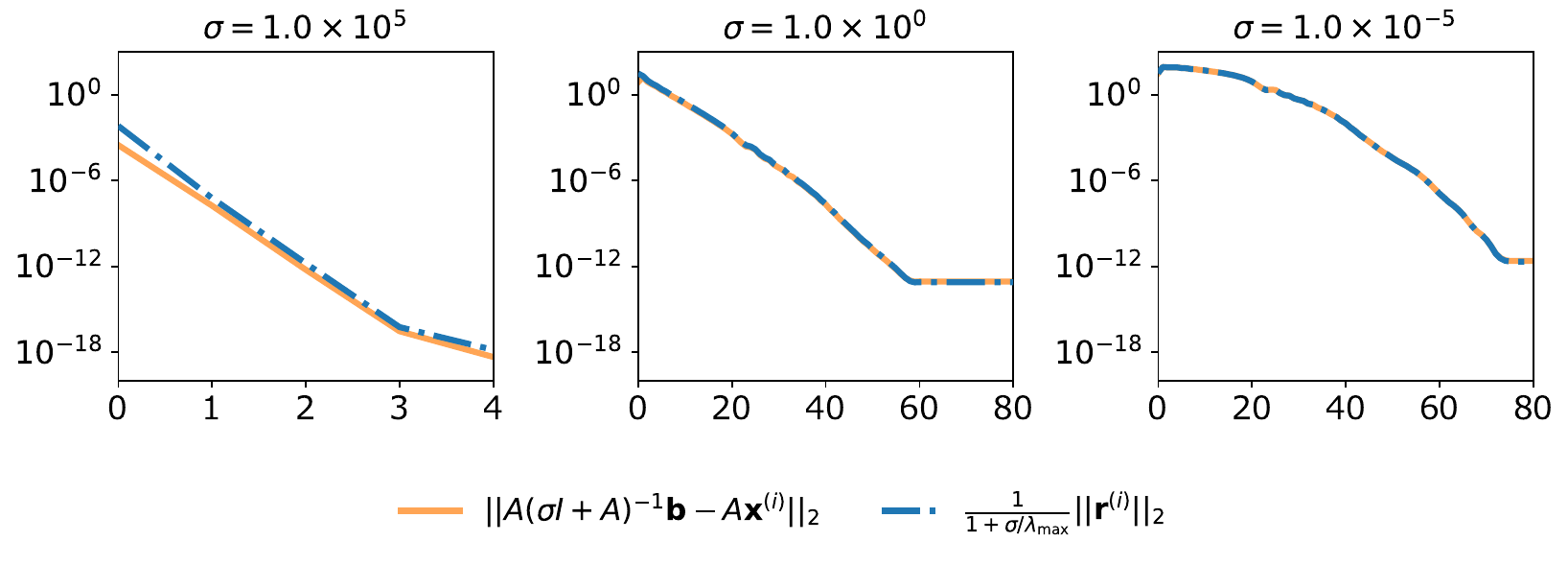}
  \caption{
  Illustrative example of the upper bound formulated by Proposition~\ref{prop:error_bound}.
  The vertical axis represents the absolute error in the $2$-norm, and the horizontal axis the number of iterations $i$ of the Shifted CG method.
  The coefficient matrix $A \in \mathbb{R}^{1024 \times 1024}$ is the finite-difference matrix of the two-dimensional Laplacian.
  The right-hand-side vector in the linear systems is $\bm{b} = [1,\dots,1]^\top$.
  }
  \label{fig:sigma_ex}
\end{figure}

\subsection{Derivation of a stopping criterion}

As a corollary, we use Proposition~\ref{prop:error_bound} to derive a stopping criterion for the iterative solution at each quadrature node.

\begin{corollary}\label{cor:convergence_criterion}
Let $\alpha \in (0,1)$.
For the $k$th shifted linear system $\left( t_k^{1/\alpha}I + A \right) \bm{x}_k = \bm{b}$, let $\tilde{\bm{x}}_k$ be an approximate solution and define its residual as
$\bm{r}_k := \bm{b} - \left( t_k^{1/\alpha}I + A \right) \tilde{\bm{x}}_k$.
If, for all $k=1,\dots,m$,
 \begin{equation}
   \left\| \bm{r}_k \right\|_2 \leq \frac{\epsilon}{2m} \frac{\alpha \pi}{\sin (\alpha \pi)} \frac{1 + t_k^{1/\alpha} / \lambda_{\text{max}}}{w_k},
   \label{eq:convergence_criterion}
 \end{equation}
then, the integrand evaluation error is bounded by $\epsilon/2$.
\end{corollary}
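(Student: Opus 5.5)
The plan is to reduce the corollary to the per-node condition \eqref{eq:pointwise_error_bound}. We already showed, via the triangle inequality, that enforcing $\|\bm{g}_k - \tilde{\bm{g}}_k\|_2 \leq \epsilon/(2m)$ for every $k$ bounds the integrand evaluation error by $\epsilon/2$. It therefore suffices to check that the residual condition \eqref{eq:convergence_criterion} implies this per-node bound.

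First, fix the computed approximation of the $k$th contribution as
\begin{equation}
\tilde{\bm{g}}_k = \frac{\sin(\alpha\pi)}{\alpha\pi} w_k A \tilde{\bm{x}}_k .
\end{equation}
This is the natural choice, since $\tilde{\bm{x}}_k$ approximates $(t_k^{1/\alpha}I + A)^{-1}\bm{b}$. Then
\begin{equation}
\bm{g}_k - \tilde{\bm{g}}_k = \frac{\sin(\alpha\pi)}{\alpha\pi} w_k \left( A(t_k^{1/\alpha} I + A)^{-1}\bm{b} - A\tilde{\bm{x}}_k \right).
\end{equation}

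Next, take norms. The scalar prefactor is nonnegative: $\sin(\alpha\pi) > 0$ for $\alpha\in(0,1)$, and the quadrature weights $w_k$ are positive for both the GJ and DE rules (I will note this as an assumption). So it comes out as a factor. Apply Proposition~\ref{prop:error_bound} with $\sigma = t_k^{1/\alpha} \geq 0$ and $\bm{x} = \tilde{\bm{x}}_k$. This gives
\begin{equation}
\|\bm{g}_k - \tilde{\bm{g}}_k\|_2 \leq \frac{\sin(\alpha\pi)}{\alpha\pi}\, \frac{w_k}{1 + t_k^{1/\alpha}/\lambda_{\text{max}}}\, \|\bm{r}_k\|_2 .
\end{equation}

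Finally, substituting \eqref{eq:convergence_criterion} for $\|\bm{r}_k\|_2$ makes all the factors cancel, leaving exactly $\epsilon/(2m)$. Summing over $k$ then completes the argument.

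There is no real obstacle here. The only points needing care are the sign of $w_k$ and the precise definition of $\tilde{\bm{g}}_k$ in terms of $\tilde{\bm{x}}_k$. Both should be stated explicitly, so that the prefactor can be pulled out of the norm without absolute values. If some $w_k$ could be negative, replacing $w_k$ by $|w_k|$ in the criterion fixes it.
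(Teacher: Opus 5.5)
Your proposal is correct and follows the paper's proof essentially verbatim: apply Proposition~\ref{prop:error_bound} with $\sigma = t_k^{1/\alpha}$ and $\bm{x} = \tilde{\bm{x}}_k$, then check the resulting per-node bound against the $\epsilon/(2m)$ budget of \eqref{eq:pointwise_error_bound}. You also state explicitly that $\tilde{\bm{g}}_k = \frac{\sin(\alpha\pi)}{\alpha\pi} w_k A\tilde{\bm{x}}_k$ and that $w_k > 0$, which is what lets the prefactor come out of the norm; the paper uses both points only implicitly.
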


\begin{proof}
In Proposition~\ref{prop:error_bound}, set $\sigma = t_k^{1/\alpha}$ and $\bm{x} = \tilde{\bm{x}}_k$.
Then,
\begin{equation}
  \left\| \bm{g}_k - \tilde{\bm{g}}_k \right\|_2
  =
  \left\|
  \frac{\sin (\alpha \pi)}{\alpha \pi} w_k A \left( t_k^{1/\alpha}I + A \right)^{-1} \bm{b}
  - \frac{\sin (\alpha \pi)}{\alpha \pi} w_k A \tilde{\bm{x}}_k
  \right\|_2
  \leq
  \frac{\sin (\alpha \pi)}{\alpha \pi} w_k \frac{1}{1 + t_k^{1/\alpha} / \lambda_{\text{max}}}
  \left\| \bm{r}_k \right\|_2.
\end{equation}
By Equation \eqref{eq:pointwise_error_bound}, it suffices to enforce $\left\| \bm{g}_k - \tilde{\bm{g}}_k \right\|_2 \le \epsilon/(2m)$. A rearrangement of the terms yields Equation \eqref{eq:convergence_criterion}.
\end{proof}

Corollary~\ref{cor:convergence_criterion} implies that, when iteratively solving the shifted systems at each quadrature node, iterations may be terminated once the residual norm $\|\bm{r}_k\|_2$ falls below the right-hand side of Equation \eqref{eq:convergence_criterion}.
This constitutes the error control technique proposed in this paper.

A concrete example of the stopping criterion applied to each shift parameter when using the DE formula is shown in Figure~\ref{fig:bound_DE}.
The stopping criterion is relaxed for both extremely large and extremely small shift parameters, and it becomes most stringent for intermediate shifts.
This behavior reflects the fact that, in Equation \eqref{eq:convergence_criterion}, both the magnitude of the shift parameter $t_k^{1/\alpha}$ and the quadrature weight $w_k$ influence the stopping criterion.

\begin{figure}[htbp]
  \centering
  \includegraphics[width=0.7\textwidth]{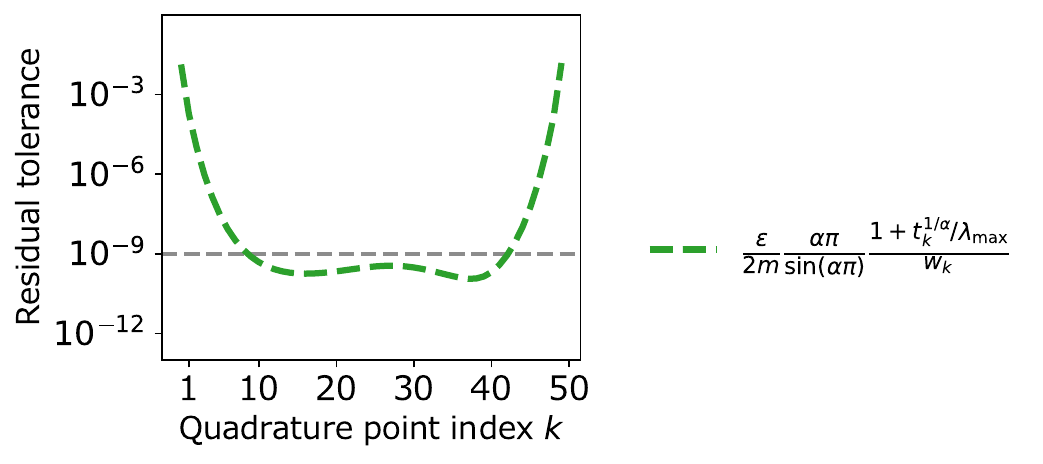}
  \caption{
  Stopping criterion for the shifted systems at the quadrature nodes derived from Corollary~\ref{cor:convergence_criterion} (DE formula, $\alpha = 0.2$, $\epsilon = 10^{-9}$).
  The vertical axis represents the admissible residual norm, and the horizontal axis the quadrature-node index $k$.
  }
  \label{fig:bound_DE}
\end{figure}

\section{Numerical Experiments}
\label{sec:numerical_experiments}

In this section, we verify that the proposed error-control technique enables the computation of $A^\alpha \bm{b}$ to a prescribed accuracy level.
All numerical experiments were performed using Python~3.13.1.
Computations were performed on an HP EliteBook 630 13-inch G9 Notebook PC equipped with a 12th Gen Intel(R) Core(TM) i5-1235U (1.30~GHz) CPU and 32~GB RAM.
Unless otherwise stated, IEEE double-precision floating-point arithmetic was used.

For the test matrices, we used finite-difference discretizations of the one-dimensional and two-dimensional Laplace operators, obtained from the three- and five-point centered difference stencils, respectively.
The corresponding matrix sizes were $n=1000$ and $n=1024$.
The right-hand side vector was chosen as $\bm{b} = [1,\dots,1]^\top$.

We computed $A^\alpha$ for $\alpha = 0.2$ and $0.5$, with total error tolerances $\epsilon = 10^{-3}, 10^{-6}$ and $10^{-9}$.
For the quadrature rules, we employed the GJ quadrature (GJ1 \cite[Eq.~(2.6)]{Cardoso} and GJ2 \cite[Eq.~(7)]{Fasi}) and the DE formula \cite[Alg.~1]{Tatsuoka1}.
The number of quadrature nodes $m$ was determined from scalar quadrature error bounds, as done elsewhere \cite{Tatsuoka1,Tatsuoka2}.
Specifically, for HPD matrices, the matrix error requirement was reduced to checking the scalar approximation error of $\lambda^\alpha$ at an extremal eigenvalue ($\lambda_{\max}$ or $\lambda_{\min}$, depending on the theorem setting).
We therefore chose the smallest $m$ such that the corresponding scalar error bound was below the quadrature error budget (here, $\epsilon/2$).
The shifted linear systems at each quadrature node were solved by using the Shifted CG method.
We used Equation \eqref{eq:convergence_criterion}, derived in Corollary~\ref{cor:convergence_criterion}, as the stopping criterion for each shifted system.
Reference solutions were computed using the \texttt{fractional\_matrix\_power} function in the SciPy library.

\begin{figure}[htbp]
  \centering
  \begin{tikzpicture}
    \node[inner sep=0pt, outer sep=0pt, anchor=north west] (panel) at (0,0) {%
      \begin{minipage}[c]{0.74\textwidth}
        \begin{minipage}{0.48\textwidth}
          \centering
          \includegraphics[width=0.9\textwidth]{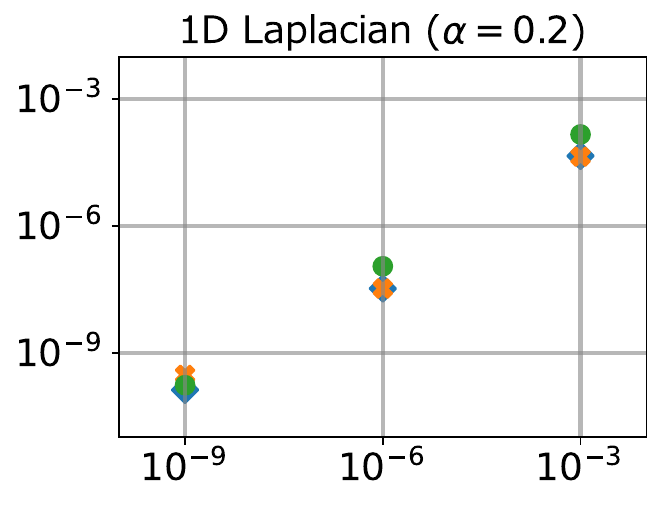}
        \end{minipage}%
        \hspace{0.005\textwidth}%
        \begin{minipage}{0.48\textwidth}
          \centering
          \includegraphics[width=0.9\textwidth]{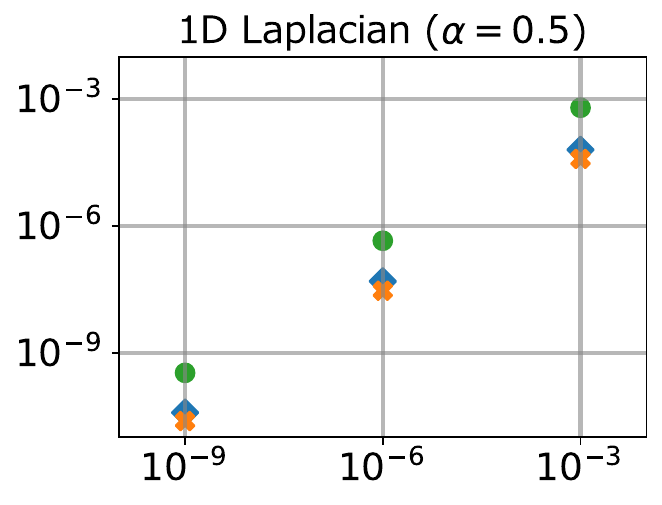}
        \end{minipage}

        \vspace{0.12cm}

        \begin{minipage}{0.48\textwidth}
          \centering
          \includegraphics[width=0.9\textwidth]{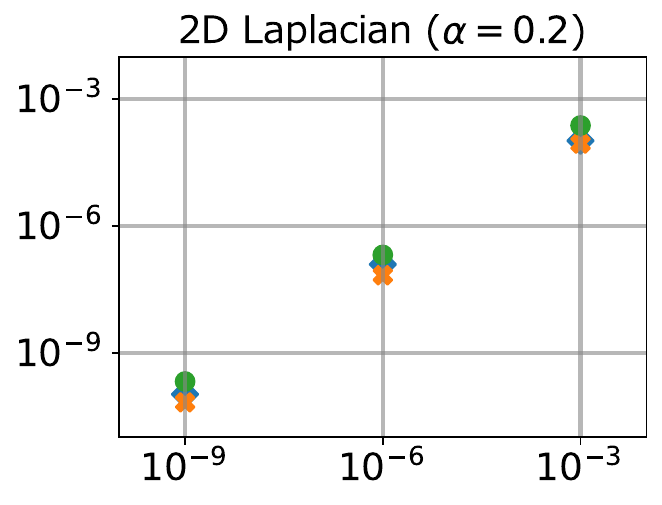}
        \end{minipage}%
        \hspace{0.005\textwidth}%
        \begin{minipage}{0.48\textwidth}
          \centering
          \includegraphics[width=0.9\textwidth]{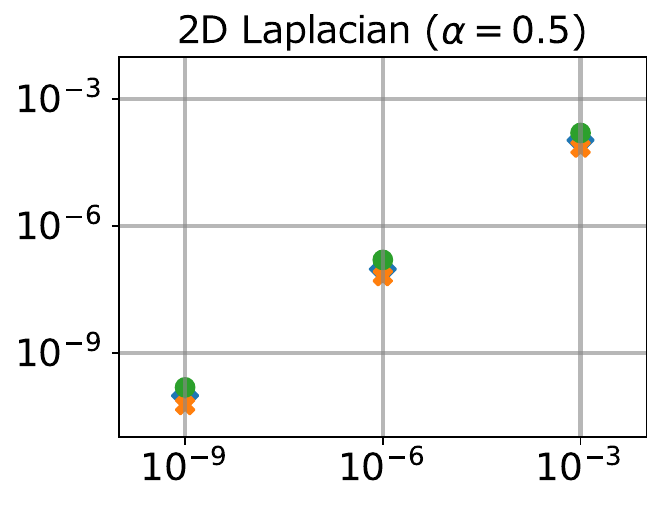}
        \end{minipage}

        \vspace{0.05cm}

        \begin{center}
          \includegraphics[width=0.3\textwidth]{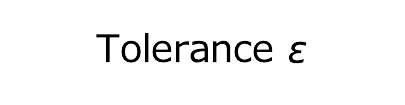}
        \end{center}
      \end{minipage}%
    };
    \node[anchor=center] at ([xshift=-9mm,yshift=9mm]panel.west)
      {\includegraphics[width=0.07\textwidth]{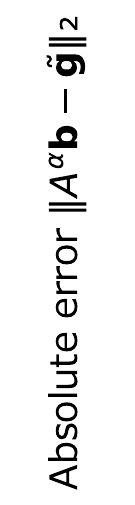}};
    \node[anchor=north west] at ([xshift=2mm,yshift=0mm]panel.north east)
      {\includegraphics[width=0.12\textwidth]{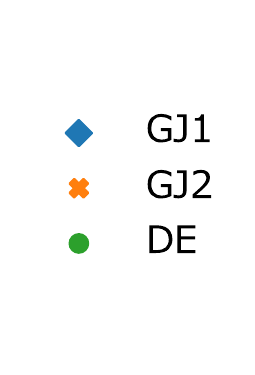}};
  \end{tikzpicture}

  \caption{
  Absolute errors of $A^\alpha \bm{b}$ computed using quadrature-based algorithms.
  Left column: $\alpha = 0.2$; right column: $\alpha = 0.5$.
  Top row: one-dimensional Laplacian; bottom row: two-dimensional Laplacian.
  The horizontal axis represents the prescribed error tolerance $\epsilon$, and the vertical axis the absolute error.
  }
  \label{fig:result}
\end{figure}

Absolute errors $\| A^\alpha \bm{b} - \sum_{k=1}^{m} \tilde{\bm{g}}_k \|_2$ computed for each test matrix and each value of $\alpha$ are reported in Figure~\ref{fig:result}.
For all the test matrices, all values of $\alpha$, and all quadrature rules (GJ1, GJ2, and DE), the absolute errors remain below the prescribed error tolerance $\epsilon$, thereby confirming the effectiveness of the per-node stopping criterion derived in Corollary~\ref{cor:convergence_criterion}.

\section{Conclusion}
\label{sec:conclusion}

In this study, we analyzed the integrand evaluation error of quadrature-based algorithms used for computing $A^\alpha \bm{b}$.
We proposed a stopping criterion for iterative methods used for solving shifted linear systems in the integrand to keep the total error below a prescribed error tolerance.

The present results open several avenues for future work.
First, we used a simple error allocation in which the quadrature discretization error and the integrand evaluation error were each controlled to be no greater than $\epsilon/2$; optimizing the allocation between these two error sources may therefore reduce the computational cost.
Second, while our analysis was restricted to the case where $A$ is Hermitian positive-definite, there is a need to extend the proposed error-control technique to more general matrices.
Third, investigating the applicability of the proposed approach to other matrix functions beyond matrix real powers would open new possibilities.
These functions would include the matrix sign function and functionals arising in applications, such as the von Neumann entropy.

\section*{Acknowledgments}
This work was supported by JSPS KAKENHI Grant Number 25H00449.



 \bibliographystyle{elsarticle-num}
 \bibliography{reference}





\end{document}